\newtheorem{thm}{Theorem}
\newtheorem{lem}[thm]{Lemma}
\theoremstyle{plain}
\theoremstyle{definition}
\numberwithin {equation}{section}
\begin{document}
\title
{Solving nonlinear equations by a\\ derivative-free form of the
King's family with memory }
\author{Somayeh Sharifi$^{a}$\thanks{s.sharifi@iauh.ac.ir} \and  Stefan Siegmund$^{b}$\thanks{stefan.siegmund@tu-dresden.de} \and Mehdi Salimi
$^{c}$\thanks{Corresponding author: mehdi.salimi@tu-dresden.de}}
\date{}
\maketitle
\begin{center}
$^{a}$Young Researchers and Elite Club, Hamedan Branch, Islamic
Azad University, Hamedan, Iran\\
$^{b,c}$Center for Dynamics, Department of Mathematics, Technische
Universit{\"a}t Dresden, 01062 Dresden, Germany\\
\end{center}
\maketitle

\begin{abstract}
In this paper, we present an iterative three-point method with
memory based on the family of King's methods to solve nonlinear
equations. This proposed method has eighth order convergence and
costs only four function evaluations per iteration which supports
the Kung-Traub conjecture on the optimal order of convergence. An
acceleration of the convergence speed is achieved by an
appropriate variation of a free parameter in each step. This self
accelerator parameter is estimated using Newton's interpolation
polynomial of fourth degree. The order of convergence is increased
from 8 to 12 without any extra function evaluation. Consequently,
this method, possesses a high computational efficiency. Finally, a
numerical comparison of the
proposed method with related methods shows its effectiveness and performance in high precision computations.\\
\\
 \textbf{Keywords}:
 Multi-point method, Nonlinear equations, Method with memory, R-order of convergence, Kung-Traub's
 conjecture.\\
2010 Mathematics Subject Classification, 65H05.
 \end{abstract}
\section{Introduction}
Solving nonlinear equations is one of the most important problems
that has interesting applications in all fields in science and
engineering. We consider iterative methods to find a simple root
$\alpha$ of a nonlinear equation $f(x)=0$, where $f : D \subset
\mathbb{R} \to \mathbb{R}$ for an open interval $D$ is a scalar
function. Newton-Raphson iteration $x_{n+1} = x_n -
\frac{f(x_n)}{f'(x_n)}$ is probably the most widely used algorithm
for finding roots. It is of second order and requires two
evaluations for each iteration step, one evaluation of $f$ and one
of $f'$ \cite{Ostrowski,Traub}.

Kung and Traub \cite{Kung} conjectured that no multi-point method
without memory with $n$ evaluations could have a convergence order
larger than $2^{n-1}$. A multi-point method with convergence order
$2^{n-1}$ is called optimal. The efficiency index, gives a measure
of the balance between those quantities, according to the formula
$p^{1/n}$, where $p$ is the convergence order of the method and
$n$ is the number of function evaluations per iteration.

Some well-known optimal two-point methods have been introduced by
Jarratt \cite{Jarrat}, King \cite{King} and Ostrowski
\cite{Ostrowski}. Some optimal three-point methods have been
proposed by Chun and Lee \cite{Chun1}, Cordero et al.\
\cite{Cordero22,Cordero5}, Lotfi et al. \cite{Lotfi}, Neta
\cite{Neta0}, Salimi et al. \cite{Salimi} and Sharma et al.\
\cite{Sharma1}. Zheng et al.\ \cite{Zheng} have presented an
optimal Steffensen-type family and Dzunic et al.\ \cite{Dzunic},
Petkovic \cite{Pet} and Sharma \cite{Sharma2} have constructed
methods with memory to solve nonlinear equations.

In this paper, we present a modification of the family of King's
methods which is derivative-free. The result of this paper is organized as
follows: Section 2 and 3 are devoted to the construction and
convergence analysis of two-point and three-point optimal derivative-free
methods. We introduce the methods with memory and prove their
R-order in Section 4. In Section 5, the new methods are
compared with a closest competitor in a series of numerical
examples. Section 6 contains a short conclusion.

\section{Derivative-free two-point method of fourth order}
\subsection{Description of derivative-free two-point method}
We start with King's family of methods which is one of the most
important two-point families for solving nonlinear equations
\cite{King}.
\begin{equation}\label{a1}
\begin{cases}
y_n=x_n-\dfrac{f(x_n)}{f'(x_n)},\\
x_{n+1}=y_n-\dfrac{f(y_n)}{f'(x_n)}\cdot\dfrac{f(x_n)+\gamma
f(y_n)}{f(x_n)+(\gamma-2)f(y_n)},\quad (n=0, 1, \ldots),\quad
\gamma\in \mathbb{R},
\end{cases}
\end{equation}
where $x_0$ is an initial approximation of a simple zero $\alpha$ of
$f$. The main idea is to construct a derivative-free class of
two-point methods with optimal order of convergence four. We
consider Steffensen-Like's method for the first step and for the
second step approximate $f'(x_n)$ by
\begin{equation*}
f'(x_n)\approx\dfrac{f[y_n,w_n]}{G(t_n)},
\end{equation*}
where $ w_n=x_n-\beta f(x_n)$, $\beta \neq 0$,
$f[y_n,w_n]=\dfrac{f(y_n)-f(w_n)}{y_n-w_n}$,
$t_n=\dfrac{f(y_n)}{f(x_n)}$ and $G$ is a real function.\\
Hence, we obtain
\begin{equation}\label{a2}
\begin{cases}
y_n=x_n-\dfrac{\beta
f(x_n)^2}{f(x_n)-f(w_n)}, \\
x_{n+1}=y_n-\dfrac{f(x_n)+\gamma
f(y_n)}{f(x_n)+(\gamma-2)f(y_n)}\cdot\dfrac{f(y_n)}{f[y_n,w_n]}G(t_n).
\end{cases}
\end{equation}
\subsection{Convergence analysis}
We shall state the convergence theorem for the family of methods
(\ref{a2}).\\
\begin{thm}\label{t1}
Let $D \subseteq \mathbb{R}$ be an open interval, $f : D
\rightarrow \mathbb{R}$ four times continuously differentiable and
let $\alpha\in D$ be a simple zero of $f$. If the initial point
$x_{0}$ is sufficiently close to $\alpha$, then the method defined
by (\ref{a2}) converges to $\alpha$ with order four if the weight
function $G : \mathbb{R} \rightarrow \mathbb{R}$ is
continuously differentiable and satisfies the conditions
\begin{equation*}
G(0)=1, \quad \quad G'(0)=2 \gamma-1.
\end{equation*}
\end{thm}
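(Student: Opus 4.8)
The plan is to carry out a standard Taylor-expansion (error-equation) analysis. Introduce the error $e_n = x_n - \alpha$ and write $c_k = \frac{f^{(k)}(\alpha)}{k!\,f'(\alpha)}$ so that $f(x_n) = f'(\alpha)\bigl(e_n + c_2 e_n^2 + c_3 e_n^3 + c_4 e_n^4 + O(e_n^5)\bigr)$. First I would expand $w_n = x_n - \beta f(x_n)$, obtaining $w_n - \alpha = (1 - \beta f'(\alpha))e_n + O(e_n^2)$, and then expand $f(w_n)$ about $\alpha$. Substituting into the first step of \eqref{a2}, I would compute the error $e_{n,y} := y_n - \alpha$; the Steffensen-like step is designed so that $e_{n,y} = A\,e_n^2 + O(e_n^3)$ for an explicit constant $A$ depending on $c_2$ and $\beta$ (namely $A = (1-\beta f'(\alpha))c_2$ or similar), i.e. the first step already has order two.

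Next I would expand the quantities entering the second step: $f(y_n) = f'(\alpha)\bigl(e_{n,y} + c_2 e_{n,y}^2 + \cdots\bigr)$, the divided difference $f[y_n,w_n] = \frac{f(y_n)-f(w_n)}{y_n - w_n}$ expanded as a series in $e_n$, and $t_n = f(y_n)/f(x_n)$, which is $O(e_n)$ since $f(y_n)=O(e_n^2)$ and $f(x_n)=O(e_n)$; more precisely $t_n = \frac{e_{n,y}}{e_n} + \cdots = A e_n + O(e_n^2)$. Because $t_n \to 0$, I would expand the weight function as $G(t_n) = G(0) + G'(0)t_n + O(t_n^2)$. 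I would also expand the King-type factor $\dfrac{f(x_n)+\gamma f(y_n)}{f(x_n)+(\gamma-2)f(y_n)}$ as $1 + 2\,\dfrac{f(y_n)}{f(x_n)} + O\!\bigl((f(y_n)/f(x_n))^2\bigr) = 1 + 2 t_n + O(t_n^2)$.

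Then I would assemble $e_{n+1} = e_{n,y} - \bigl(1 + 2t_n + \cdots\bigr)\cdot\dfrac{f(y_n)}{f[y_n,w_n]}\,G(t_n)$ and collect terms order by order in $e_n$. The leading behaviour of $\frac{f(y_n)}{f[y_n,w_n]}$ is $e_{n,y}\cdot(1 + \text{l.o.t.})$, so the product $(1+2t_n)\frac{f(y_n)}{f[y_n,w_n]}G(t_n)$ must match $e_{n,y}$ up to and including order $e_n^3$ in order for the $e_n^2$ and $e_n^3$ contributions to $e_{n+1}$ to vanish — leaving $e_{n+1}=O(e_n^4)$. Imposing the vanishing of the $e_n^2$ coefficient forces $G(0)=1$; imposing the vanishing of the $e_n^3$ coefficient, after substituting $G(0)=1$, forces the combination $G'(0) + 1 - 2\gamma = 0$, i.e. $G'(0) = 2\gamma - 1$, exactly the stated conditions. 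Under these two conditions one reads off a closed-form asymptotic error constant and concludes $e_{n+1} = C e_n^4 + O(e_n^5)$, hence order four.

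The main obstacle is purely bookkeeping: one must expand the divided difference $f[y_n,w_n]$ and the rational King factor consistently to high enough order in $e_n$ (keeping track that $e_{n,y}$ is itself $O(e_n^2)$, so terms that look like $t_n^2$ are already $O(e_n^2)$ and must be retained, while genuine $O(t_n^2)$ corrections to $G$ only contribute at $O(e_n^4)$ and may be discarded for the order-four claim). Organizing the algebra — ideally with a symbolic computation system — so that the $e_n^2$ and $e_n^3$ coefficients are obtained cleanly is the only real work; once they are in hand, the two conditions $G(0)=1$ and $G'(0)=2\gamma-1$ drop out immediately and the convergence order follows.
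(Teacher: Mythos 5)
Your plan is correct and follows essentially the same route as the paper's own proof: Taylor expansion of $f(x_n)$, $f(w_n)$, $f(y_n)$, the divided difference $f[y_n,w_n]$, the King factor $1+2t_n+O(t_n^2)$ and $G(t_n)=G(0)+G'(0)t_n+\cdots$, followed by reading off the conditions $G(0)=1$ and $G'(0)=2\gamma-1$ from the vanishing of the $e_n^2$ and $e_n^3$ coefficients. The coefficients you predict match the paper's $R_2$ and $R_3$, so only the routine bookkeeping remains.
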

\begin{proof}
Let $e_{n}:=x_{n}-\alpha$, $e_{n,w}:=w_n-\alpha$,
$e_{n,y}:=y_{n}-\alpha$ and
$c_{n}:=\dfrac{f^{(n)}(\alpha)}{n!f^{'}(\alpha)}, \quad n=2,3,\ldots$.\\
Using Taylor expansion of $f$ at $\alpha$ and taking into
account $f(\alpha)=0$, we have
\begin{equation}\label{a3}
f(x_{n}) = f^{'}(\alpha)(e_{n} +
c_{2}e_{n}^{2}+c_{3}e_{n}^{3}+c_{4}e_{n}^{4})+O(e_n^5),
\end{equation}
then
\begin{equation*}\
e_{n,w}=w_n-\alpha=(1-\beta f'(\alpha))e_n-\beta
f'(\alpha)c_2e_n^2+O(e_n^3),
\end{equation*}
and
\begin{equation}\label{a4}
f(w_{n})=f^{'}(\alpha)(e_{n,w} +
c_{2}e_{n,w}^{2}+c_{3}e_{n,w}^{3}+c_{4}e_{n,w}^{4})+O(e_{n,w}^5).
\end{equation}
From (\ref{a3}) and (\ref{a4}), we have
\begin{equation}\label{a5}
\dfrac{\beta f(x_n)^2}{f(x_n)-f(w_n)}=e_n-(1-\beta
f'(\alpha))c_2e_n^2+O(e_n^3),
\end{equation}
by substituting (\ref{a5}) in (\ref{a2}), we get
\begin{equation*}
e_{n,y}=y_n-\alpha=(1-\beta f'(\alpha))c_2e_n^2+O(e_n^3).
\end{equation*}
As well as
\begin{equation}\label{a6}
f(y_{n})=f^{'}(\alpha)(e_{n,y} +
c_{2}e_{n,y}^{2}+c_{3}e_{n,y}^{3}+c_{4}e_{n,y}^{4})+O(e_{n,y}^5).
\end{equation}
From (\ref{a3}) and (\ref{a6}), we obtain
\begin{equation}\label{a7}
\begin{split}
t_{n}&=\dfrac{f(y_n)}{f(x_n)}=(1-\beta f'(\alpha))c_2e_n\\
&+\left(-(3+\beta f'(\alpha)(-3+\beta f'(\alpha)))c_2^2+(-2+\beta
f'(\alpha))(-1+\beta f'(\alpha))c_3\right)e_n^2+O(e_n^3),
\end{split}
\end{equation}
by expanding $G(t_n)$ around $0$, we have
\begin{equation}\label{a8}
G(t_n)=G(0)+G'(0)t_n+O(t_n^2),
\end{equation}
moreover, from (\ref{a4}) and (\ref{a6}), we obtain
\begin{equation}\label{a9}
\begin{split}
f[y_n,w_n]&=\dfrac{f(y_n)-f(w_n)}{y_n-w_n}
=f'(\alpha)+f'(\alpha)(1-\beta f'(\alpha))c_2e_n\\
&+f'(\alpha)((1-2\beta f'(\alpha))c_2^2+(-1+\beta
f'(\alpha))^2c_3)e_n^2+O(e_n^3),
\end{split}
\end{equation}
then, by substituting (\ref{a3})-(\ref{a9}) in (\ref{a2}) we get
\begin{equation*}
e_{n+1}=z_n-\alpha=R_2e_n^2+R_3e_n^3+R_4e_n^4+O(e_n^5),
\end{equation*}
with
\begin{align*}
R_2&=-c_2(1-\beta f'(\alpha))(-1+G(0)), \\
R_3&=c_2^2(1-\beta f(\alpha))^2(-1+2\gamma-G'(0)),\\
R_4&=-c_2(1-\beta f'(\alpha))^2(c_2^2(-1-\beta
f'(\alpha)-2\gamma^2(1-\beta f'(\alpha)))+c_3).
\end{align*}
Therefore, to provide the fourth order convergence of the
two-point method (\ref{a2}), it is necessary to choose $R_i=0$ for $i=2,3$, and to achieve this we use the fact that
\begin{align*}
R_2&=0 \quad \text{if} \quad G(0)=1,\\
R_3&=0 \quad \text{if} \quad G'(0)=2\gamma-1.
\end{align*}
It is clear that $R_{4}\neq 0$ in general. Thus, method (\ref{a2}) converges
to $\alpha$ with order four and the error equation becomes
\begin{equation}
e_{n+1}=-c_2\left(1-\beta f'(\alpha)\right)^2\left(c_2^2(-1-\beta
f'(\alpha)-2\gamma^2(1-\beta
f'(\alpha)))+c_3\right)e_n^4+O(e_n^5).
\end{equation}
This finishes the proof of the theorem.
\end{proof}

\section{Derivative-free three-point method with order eight}

\subsection{Description of derivative-free three-point method}

Again, we will add one Newton step to the method (\ref{a2})\\
\begin{equation}\label{b1}
\begin{cases}
y_n=x_n-\dfrac{\beta f(x_n)^2}{f(x_n)-f(w_n)},\\
z_n=y_n-\dfrac{f(x_n)+\gamma
f(y_n)}{f(x_n)+(\gamma-2)f(y_n)}\dfrac{f(y_n)}{f[y_n,w_n]}G(t_n),\\
x_{n+1}=z_n-\dfrac{f(z_n)}{f'(z_n)}.
\end{cases}
\end{equation}
As it is seen the function is evaluated for five times, so this
method is not optimal and it is also not free of derivatives. To make it an optimal and
derivative-free method, we approximate $ f'(z_n) $ with Newton's
interpolation polynomial of degree three at the point $x_n$,
$w_n$, $y_n$ and $z_n$.
\begin{align*}
N_3(t;z_n,y_n,x_n,w_n)&:=f(z_n)+f[z_n,y_n](t-z_n)\\
&+f[z_n,y_n,x_n](t-z_n)(t-y_n)+f[z_n,y_n,x_n,w_n](t-z_n)(t-y_n)(t-x_n).
\end{align*}
It is clear that
\begin{equation*}
N_3(z_n)=f(z_n), \quad \text{and} \quad
N_3'(t)\mid_{t=z_n}=f'(z_n).
\end{equation*}
Then
\begin{equation*}
N_3'(z_n)=\left[\frac{d}{dt}N_3(t)\right]_{t=z_n}=f[z_n,y_n]+f[z_n,y_n,x_n](z_n-y_n)+f[z_n,y_n,x_n,w_n](z_n-y_n)(z_n-x_n),
\end{equation*}
and hence we get
\begin{equation}\label{b2}
\begin{cases}
y_n=x_n-\dfrac{\beta f(x_n)^2}{f(x_n)-f(w_n)},\\
z_n=y_n-\dfrac{f(x_n)+\gamma
f(y_n)}{f(x_n)+(\gamma-2)f(y_n)}\dfrac{f(y_n)}{f[y_n,w_n]}G(t_n),\\
x_{n+1}=z_n-\dfrac{f(z_n)}{f[z_n,y_n]+f[z_n,y_n,x_n](z_n-y_n)+f[z_n,y_n,x_n,w_n](z_n-y_n)(z_n-x_n)}.\\
\end{cases}
\end{equation}

\subsection{Convergence analysis}
The following theorem shows that the method (\ref{b2}) has
convergence order
eight.
\begin{thm}\label{t2}
Let $D \subseteq \mathbb{R}$ be an open interval, $f : D
\rightarrow \mathbb{R}$ eight times continuously differentiable
and let $ \alpha\in D$ be a simple zero of $f$. If the initial
point $x_{0}$ is sufficiently close to $\alpha$ and conditions of
Theoerem \ref{t1} are established, then the method defined by
(\ref{b2}) converges to $\alpha$ with order eight.
\end{thm}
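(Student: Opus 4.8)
The plan is to extend the Taylor-expansion bookkeeping used in the proof of Theorem~\ref{t1} by one more step, tracking the error $e_{n,z}:=z_n-\alpha$ through the final Newton-like correction in (\ref{b2}). First I would recall from Theorem~\ref{t1} that, under the stated conditions $G(0)=1$ and $G'(0)=2\gamma-1$, the second substep already yields
\begin{equation*}
e_{n,z}=z_n-\alpha=-c_2(1-\beta f'(\alpha))^2\bigl(c_2^2(-1-\beta f'(\alpha)-2\gamma^2(1-\beta f'(\alpha)))+c_3\bigr)e_n^4+O(e_n^5),
\end{equation*}
so $e_{n,z}=O(e_n^4)$, and I would also keep the lower-order expansions $e_{n,w}=(1-\beta f'(\alpha))e_n+O(e_n^2)$ and $e_{n,y}=(1-\beta f'(\alpha))c_2 e_n^2+O(e_n^3)$ from that proof. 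Then I would expand $f(z_n)$ about $\alpha$ as in (\ref{a3})–(\ref{a6}), giving $f(z_n)=f'(\alpha)e_{n,z}+O(e_{n,z}^2)=f'(\alpha)e_{n,z}+O(e_n^8)$.

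The heart of the argument is to show that the denominator
\[
D_n:=f[z_n,y_n]+f[z_n,y_n,x_n](z_n-y_n)+f[z_n,y_n,x_n,w_n](z_n-y_n)(z_n-x_n)
\]
agrees with $f'(z_n)$ closely enough. The key identity is the standard Newton-interpolation error formula: since $N_3$ interpolates $f$ at the four nodes $z_n,y_n,x_n,w_n$, one has
\[
f(t)-N_3(t)=f[z_n,y_n,x_n,w_n,t]\,(t-z_n)(t-y_n)(t-x_n)(t-w_n),
\]
and differentiating at $t=z_n$ kills every term except the one without a $(t-z_n)$ factor, so
\[
f'(z_n)-D_n=f[z_n,y_n,x_n,w_n,z_n]\,(z_n-y_n)(z_n-x_n)(z_n-w_n).
\]
Using that divided differences of a smooth $f$ are bounded (by $f^{(4)}$-type quantities, whence the hypothesis of eight-fold differentiability is comfortably sufficient) together with $z_n-y_n=O(e_n^2)$, $z_n-x_n=O(e_n)$, $z_n-w_n=O(e_n)$, I would conclude $f'(z_n)-D_n=O(e_n^4)$, and more precisely I would want the leading coefficient, which should come out proportional to $c_2c_3c_4$ or similar, times $(1-\beta f'(\alpha))^{\text{(something)}}e_n^4$. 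Combined with $1/f'(z_n)=1/f'(\alpha)+O(e_n^4)$, this gives
\[
e_{n+1}=e_{n,z}-\frac{f(z_n)}{D_n}=e_{n,z}\Bigl(1-\frac{f'(z_n)}{D_n}\Bigr)+O\!\left(\frac{f(z_n)(f(z_n)-f'(z_n)e_{n,z})}{\cdots}\right),
\]
and since $e_{n,z}=O(e_n^4)$ while $1-f'(z_n)/D_n=O(e_n^4)$, the product is $O(e_n^8)$; the remaining correction term is even higher order because $f(z_n)-f'(z_n)e_{n,z}=O(e_{n,z}^2)=O(e_n^8)$.

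The main obstacle I anticipate is purely computational stamina rather than conceptual: one must carry the expansions of $f(w_n)$, $f(y_n)$, $f(z_n)$ and the three divided differences to high enough order in $e_n$ to extract the exact coefficient of $e_n^8$ in $e_{n+1}$ and verify it is generically nonzero, which is where a computer algebra system is essentially mandatory. A secondary subtlety is that $z_n-x_n$ and $z_n-w_n$ are each $O(e_n)$ (not smaller), so it is genuinely the factor $z_n-y_n=O(e_n^2)$ together with $e_{n,z}=O(e_n^4)$ that supplies the orders $2+4+1+1=8$ needed; I would make sure the writeup states this accounting explicitly rather than claiming more cancellation than occurs. Once the coefficient of $e_n^8$ is displayed and seen to be nonzero in general, the order-eight claim follows, and since the method uses only the four evaluations $f(x_n),f(w_n),f(y_n),f(z_n)$, it is optimal in the sense of Kung–Traub, completing the proof.
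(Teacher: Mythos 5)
Your argument is correct in substance but follows a genuinely different route from the paper. The paper's proof is a direct Taylor-expansion computation: it expands $f(x_n)$, $f(w_n)$, $f(y_n)$, $f(z_n)$ and each of the divided differences $f[z_n,y_n]$, $f[z_n,y_n,x_n]$, $f[z_n,y_n,x_n,w_n]$ separately in powers of $e_n$, substitutes everything into \eqref{b2}, and reads off the coefficient of $e_n^8$. You instead exploit the structure of the third substep via the Newton interpolation remainder, obtaining the clean identity $f'(z_n)-D_n=f[z_n,y_n,x_n,w_n,z_n]\,(z_n-y_n)(z_n-x_n)(z_n-w_n)=O(e_n^4)$ and pairing it with $e_{n,z}=O(e_n^4)$; this makes the order count $2+1+1+4=8$ transparent and explains \emph{why} the order doubles from $4$ to $8$, which the paper's brute-force computation obscures. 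The trade-off is that the paper's approach delivers the explicit asymptotic error constant, whereas yours establishes order at least eight and defers the constant to a symbolic computation. One slip to fix: your claim that the remaining correction term is ``even higher order'' is not right. Writing
\begin{equation*}
e_{n+1}=e_{n,z}\cdot\frac{D_n-f'(z_n)}{D_n}+\frac{f'(z_n)e_{n,z}-f(z_n)}{D_n},
\end{equation*}
the second term equals $c_2e_{n,z}^2+O(e_{n,z}^3)$, which is $O(e_n^8)$ --- the \emph{same} order as the first term, not higher. Both contribute to the leading coefficient: the interpolation-error term supplies the $-c_4$ piece (via $f[z_n,y_n,x_n,w_n,z_n]\to c_4f'(\alpha)$) and the $c_2e_{n,z}^2$ term supplies the rest, and their sum reproduces exactly the paper's error equation
\begin{equation*}
e_{n+1}=(1-\beta f'(\alpha))^4c_2^2\bigl(c_2^2(-1-\beta f'(\alpha)-2\gamma^2(1-\beta f'(\alpha)))+c_3\bigr)\bigl(c_2^3(-1-\beta f'(\alpha)-2\gamma^2(1-\beta f'(\alpha)))+c_2c_3-c_4\bigr)e_n^8+O(e_n^9).
\end{equation*}
This does not affect your conclusion that $e_{n+1}=O(e_n^8)$, but if you extracted the constant from the first term alone you would get the wrong answer, so the writeup should keep both contributions.
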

\begin{proof}
Define $e_{n}:=x_{n}-\alpha$, $e_{n,w}:=w_n-\alpha$,
$e_{n,y}:=y_{n}-\alpha$, $e_{n,z}:=z_{n}-\alpha$ and
$c_{n}:=\frac{f^{(n)}(\alpha)}{n!f^{'}(\alpha)}$ for $n=2,3,\ldots$.

Using Taylor expansion of $f$ at $\alpha$ and taking into
account that $f(\alpha)=0$, we have
\begin{equation}\label{b3}
f(x_{n}) = f^{'}(\alpha)(e_{n} +
c_{2}e_{n}^{2}+c_{3}e_{n}^{3}+\ldots+c_{8}e_{n}^{8})+O(e_n^9),
\end{equation}
then
\begin{equation*}
\begin{split}
e_{n,w}&=w_n-\alpha=(1-\beta f'(\alpha))e_n-\beta
f'(\alpha)c_2e_n^2-\beta f'(\alpha)c_3e_n^3-\beta
f'(\alpha)c_4e_n^4-\beta f'(\alpha)c_5e_n^5\\
&+O(e_n^6),
\end{split}
\end{equation*}
and
\begin{equation}\label{b4}
f(w_{n})=f^{'}(\alpha)(e_{n,w} +
c_{2}e_{n,w}^{2}+c_{3}e_{n,w}^{3}+\ldots+c_{8}e_{n,w}^{8})+O(e_{n,w}^9),
\end{equation}
from (\ref{b3}) and (\ref{b4}), we have
\begin{equation}\label{b6}
\begin{split}
e_{n,y}&=y_n-\alpha=(1-\beta f'(\alpha))c_2e_n^2+(-(2+\beta
f'(\alpha)(-2+\beta f'(\alpha)))c_2^2+(-2+\beta
f'(\alpha))\\
&(-1+\beta f'(\alpha))c_3)e_n^3+((4-\beta f'(\alpha)(5+\beta
f'(\alpha)(-3+\beta f'(\alpha))))c_2^3+(-7+\beta
f'(\alpha)\\
&(10+\beta f'(\alpha)(-7+2\beta f'(\alpha))))c_2c_3-(-1+\beta
f'(\alpha))(3+\beta f'(\alpha)(-3+\beta
f'(\alpha)))c_4)e_n^4\\
&+O(e_n^5),
\end{split}
\end{equation}
as well as
\begin{equation}\label{b7}
f(y_{n})=f^{'}(\alpha)(e_{n,y} +
c_{2}e_{n,y}^{2}+c_{3}e_{n,y}^{3}+\ldots+c_{8}e_{n,y}^{8})+O(e_{n,y}^9).
\end{equation}
According to Theorem \ref{t1}, we get
\begin{equation*}
\begin{split}
e_{n,z}&=z_n-\alpha=-c_2(1-\beta f'(\alpha))^2((-1-\beta
f'(\alpha)+2\gamma^2(-1+\beta
f'(\alpha)))c_2^2+c_3)e_n^4\\
&-(-1+\beta f'(\alpha))((-4+\beta f'(\alpha)-(\beta
f'(\alpha))^3-2\gamma(-1+\beta f'(\alpha))^3+2\gamma^3\\
&(-1+\beta f'(\alpha))^3+2\gamma^2(-1+\beta f'(\alpha))(7+2\beta
f'(\alpha)(-3+\beta
f'(\alpha))))c_2^4+(8-6\gamma^2\\
&(-2+\beta f'(\alpha))(-1+\beta f'(\alpha))^2+\beta
f'(\alpha)(-4+\beta f'(\alpha)(-5+3\beta
f'(\alpha))))c_2c_3\\
&-(-2+\beta f'(\alpha))(-1+\beta f'(\alpha))c_3^2-(-2+\beta
f'(\alpha))(-1+\beta f'(\alpha))c_2c_4)e_n^5+O(e_{n}^6),
\end{split}
\end{equation*}
by using the expansion of $f(z_n)$, we have
\begin{equation}\label{b8}
f(z_{n}) = f^{'}(\alpha)(e_{n,z} +
c_{2}e_{n,z}^{2}+c_{3}e_{n,z}^{3}+\ldots+c_{8}e_{n,z}^{8})+O(e_{n,z}^9).
\end{equation}
From (\ref{b7}) and (\ref{b8}), we get
\begin{equation}\label{b9}
\begin{split}
f[z_n,y_n]&=\dfrac{f(z_n)-f(y_n)}{z_n-y_n}=f'(\alpha)+f'(\alpha)c_2^2(1-\beta
f'(\alpha))e_n^2\\
&+(-f'(\alpha)(2+\beta f'(\alpha))c_2^3+f'(\alpha)(-2+\beta
f'(\alpha))(-1+\beta f'(\alpha))c_2c_3)e_n^3\\
&+O(e_n^4).
\end{split}
\end{equation}
And from (\ref{b3}) and (\ref{b7}), we have
\begin{equation}\label{b10}
\begin{split}
f[y_n,x_n]&=\dfrac{f(y_n)-f(x_n)}{y_n-x_n}=f'(\alpha)+c_2f'(\alpha)e_n+f'(\alpha)(c_2^2(1-\beta
f'(\alpha)+c_3)e_n^2 \\
&+f'(\alpha)(-(2+\beta f'(\alpha)(-2+\beta
f'(\alpha)))c_2^3+(-3+\beta f'(\alpha))(-1+\beta
f'(\alpha))c_2c_3+c_4)e_n^3\\
&+O(e_n^4).
\end{split}
\end{equation}
From (\ref{b3}) and (\ref{b4}), we obtain
\begin{equation}\label{b11}
\begin{split}
f[x_n,w_n]&=\dfrac{f(x_n)-f(w_n)}{x_n-w_n}=f'(\alpha)+c_2f'(\alpha)(2-\beta
f'(\alpha))e_n+f'(\alpha)(-c_2^2\beta f'(\alpha)\\
&+(3+c_3\beta f'(\alpha) (-3+\beta f'(\alpha)))e_n^2-f'(\alpha)
(-2 +
   \beta f'(\alpha) ) (-2c_2c_3 \beta f'(\alpha)\\
   &+ (2 +\beta f'(\alpha) (-2 + \beta f'(\alpha)))c_4
   )e_n^3+O(e_n^4).
\end{split}
\end{equation}
From (\ref{b9}) and (\ref{b10}), we get
\begin{equation}\label{b12}
\begin{split}
f[z_n,y_n,x_n]&=\dfrac{f[z_n,y_n]-f[y_n,x_n]}{z_n-x_n}=c_2f'(\alpha)+c_3f'(\alpha)
e_n+f'(\alpha)(c_2c_3(1-\beta f'(\alpha))+c_4)e_n^2\\
&+f'(\alpha)(-c_2^2c_3(2+\beta f'(\alpha)(-2+\beta
f'(\alpha)))+c_3^2(-2+\beta f'(\alpha))(-1+\beta
f'(\alpha))\\
&+c_2c_4(1-\beta f'(\alpha))+c_5)e_n^3+O(e_n^4).
\end{split}
\end{equation}
And from (\ref{b10}) and (\ref{b11}), we get
\begin{equation}\label{b13}
\begin{split}
f[y_n,x_n,w_n]&=\dfrac{f[y_n,x_n]-f[x_n,w_n]}{y_n-w_n}=c_2f'(\alpha)
 +c_3 f'(\alpha) (2-\beta f'(\alpha)) e_n\\
 &+ c_2c_3 f'(\alpha)((1- 2\beta f'(\alpha))+(3+c_4\beta
f'(\alpha) (-3 + \beta f'(\alpha)))) e_n^2\\
&+O(e_n^3).
\end{split}
\end{equation}
Therefore, from (\ref{b12}) and (\ref{b13}), we get
\begin{equation}\label{b14}
\begin{split}
f[z_n,y_n,x_n,w_n]&=\dfrac{f[z_n,y_n,x_n]-f[y_n,x_n,w_n]}{z_n-w_n}=c_3f'(\alpha)
 +c_4 f'(\alpha) (2-\beta f'(\alpha)) e_n\\
 &+ f'(\alpha)(c_2c_4(1-2\beta f(\alpha))+(3+c_5\beta
f'(\alpha)(-3+\beta f'(\alpha)))e_n^2\\
&+O(e_n^3).
\end{split}
\end{equation}
Finally, by substituting (\ref{b3})-(\ref{b14}) in (\ref{b2}), we
obtain
\begin{equation*}
\begin{split}
e_{n+1}&=x_{n+1}-\alpha=(1-\beta f'(\alpha))^4c_2^2(c_2^2(-1-\beta
f'(\alpha)-2\gamma^2(1-\beta f'(\alpha))+c_3)\\
&(c_2^3(-1-\beta f'(\alpha)-2\gamma^2(1-\beta
f'(\alpha)))+c_2c_3-c_4)e_n^8+O(e_n^9).
\end{split}
\end{equation*}
Which shows that the method (\ref{b2}) has optimal convergence
order equal to eight.
\end{proof}

\section{The development of a new method with memory}

In this section, we design a new method with memory by using
\emph{self-accelerating parameters} on method (\ref{b2}). We
observe that the order of convergence of method (\ref{b2}) is
eight when $\beta\neq1/f'(\alpha)$. If $\beta=1/f'(\alpha)$ the
convergence order of method (\ref{b2}) would be 12. Since the
value $f'(\alpha)$ is not available, we use an approximation
$\widehat{f'}(\alpha)\approx f'(\alpha)$, instead. The goal is to
construct a method with memory that incorporates the calculation
of the parameter $\beta=\beta_n$ as the iteration proceeds by the
formula $\beta_n=1/\widehat{f'}(\alpha)$ for $n=1,2,3,\ldots$. It
is assumed that an initial estimate $\beta_0$ should be chosen
before starting the iterative process. In the following, we use
the symbols $\rightarrow$, $O$ and $\sim$ according to the
following convention \cite{Traub}: If $\lim_{n\rightarrow
\infty}f(x_n)=C$, we write $f(x_n)\rightarrow C$ or $f\rightarrow
C$, where $C$ is a nonzero constant. If $\tfrac{f}{g}\rightarrow
C$, we shall write $f=O(g)$ or $f\sim C(g)$.

We approximate $\widehat{f'}(\alpha)$ by $N'_4(x_n)$, then we have
\begin{equation}\label{bbb}
\beta_n=\dfrac{1}{N'_4(x_n)},
\end{equation}
where $N_4'(t):=N_4(t;x_n, z_{n-1}, y_{n-1}, w_{n-1}, x_{n-1})$ is
Newton's interpolation polynomial of fourth degree, set through
five available approximations $(x_n, z_{n-1}, y_{n-1}, w_{n-1},
x_{n-1})$.
\begin{equation}
\begin{split}
N'_4(x_n)=\left[\frac{d}{dt}N_4(t)\right]_{t=x_n}&=f[x_n,z_{n-1}]+f[x_n,z_{n-1},y_{n-1}](x_n-z_{n-1})\\
&+f[x_n,z_{n-1},y_{n-1},w_{n-1}](x_n-z_{n-1})(x_n-y_{n-1})\\
&+f[x_n,z_{n-1},y_{n-1},w_{n-1},x_{n-1}](x_n,z_{n-1})(x_n-y_{n-1})(x_n-w_{n-1}).
\end{split}
\end{equation}
\begin{lem}[\cite{Sharma2}]\label{l1}
If $\beta_n=\frac{1}{N_4'(x_n)}$, $n=1,2,\ldots$ then the estimate
\begin{equation}
(1-\beta_n f'(\alpha))\sim c_5e_{n-1}e_{n-1,w}e_{n-1,y}e_{n-1,z},
\end{equation}
holds.
\end{lem}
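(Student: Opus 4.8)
The plan is to use the standard error-estimate technique for Newton interpolation: the derivative of the interpolating polynomial at a node differs from the true derivative by a term governed by the next divided difference and the product of the distances to the remaining nodes. Concretely, $N_4$ interpolates $f$ at the five points $x_n, z_{n-1}, y_{n-1}, w_{n-1}, x_{n-1}$, so by the classical error formula for Hermite/Newton interpolation one has
\begin{equation*}
f(t) - N_4(t) = f[t, x_n, z_{n-1}, y_{n-1}, w_{n-1}, x_{n-1}]\,(t-x_n)(t-z_{n-1})(t-y_{n-1})(t-w_{n-1})(t-x_{n-1}).
\end{equation*}
Differentiating this identity with respect to $t$ and then evaluating at $t = x_n$ kills every term in the product rule except the one in which the factor $(t-x_n)$ is differentiated, giving
\begin{equation*}
f'(x_n) - N_4'(x_n) = f[x_n, x_n, z_{n-1}, y_{n-1}, w_{n-1}, x_{n-1}]\,(x_n-z_{n-1})(x_n-y_{n-1})(x_n-w_{n-1})(x_n-x_{n-1}).
\end{equation*}

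Next I would invoke the mean-value property of divided differences: since $f$ is assumed sufficiently smooth, the sixth-order divided difference $f[x_n, x_n, z_{n-1}, y_{n-1}, w_{n-1}, x_{n-1}]$ equals $\frac{f^{(5)}(\xi_n)}{5!}$ for some $\xi_n$ in the smallest interval containing the nodes, and as all these nodes converge to $\alpha$, this quantity tends to $\frac{f^{(5)}(\alpha)}{5!} = c_5 f'(\alpha)$ (using the definition $c_k = f^{(k)}(\alpha)/(k!\,f'(\alpha))$ from the proofs above). Then I would rewrite each factor in terms of the errors: $x_n-x_{n-1} = e_n - e_{n-1} \sim -e_{n-1}$ because $e_n = O(e_{n-1}^8) \to 0$ faster; similarly $x_n - z_{n-1} \sim -e_{n-1,z}$, $x_n - y_{n-1} \sim -e_{n-1,y}$, and $x_n - w_{n-1} \sim -e_{n-1,w}$. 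Multiplying the four sign factors gives $(-1)^4 = 1$, so
\begin{equation*}
f'(\alpha)\bigl(1 - \beta_n f'(\alpha)\bigr) = f'(\alpha) - N_4'(x_n) \sim c_5 f'(\alpha)\, e_{n-1}\,e_{n-1,w}\,e_{n-1,y}\,e_{n-1,z},
\end{equation*}
and dividing by $f'(\alpha)$ (which is nonzero since $\alpha$ is a simple zero) yields the claimed asymptotic equivalence, after absorbing $N_4'(x_n) \to f'(\alpha)$ into the $\sim$ relation so that $1 - \beta_n f'(\alpha) = (f'(\alpha) - N_4'(x_n))/N_4'(x_n)$ has the same leading behavior.

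The main obstacle — and the step requiring the most care — is justifying the asymptotic replacements cleanly within the $\sim$ convention the paper has just set up, in particular verifying that the sixth divided difference really does converge to $c_5 f'(\alpha)$ and that no lower-order contamination survives: one must be sure that the confluent node $x_n$ (appearing twice) is handled correctly, which is exactly why the error formula produces $f^{(5)}$ rather than $f^{(4)}$, and that the product of the four distance factors does not accidentally vanish to higher order (it does not, generically, since each $e_{n-1,\bullet}$ is a nonzero power of $e_{n-1}$ by the error equations established in Theorems \ref{t1} and \ref{t2}). Since this lemma is quoted verbatim from \cite{Sharma2}, I would keep the argument brief, citing the reference for the detailed bookkeeping and only indicating the Newton-interpolation error identity and the divided-difference mean value theorem as the two structural ingredients.
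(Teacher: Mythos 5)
The paper does not actually prove this lemma --- it is imported verbatim from \cite{Sharma2} with only the citation as justification --- so there is no in-paper proof to compare against. Your argument is the standard one used in that literature and its two structural ingredients (the Newton interpolation error identity differentiated at the node, plus the mean-value representation of the confluent divided difference) are exactly right; the surviving term $f[x_n,x_n,z_{n-1},y_{n-1},w_{n-1},x_{n-1}]\prod(x_n-\cdot)$ and the limit $f^{(5)}(\alpha)/5!=c_5f'(\alpha)$ are both correct.

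Two points need repair, though. First, the identity $f'(\alpha)\bigl(1-\beta_nf'(\alpha)\bigr)=f'(\alpha)-N_4'(x_n)$ is false: from $\beta_n=1/N_4'(x_n)$ one gets $1-\beta_nf'(\alpha)=\bigl(N_4'(x_n)-f'(\alpha)\bigr)/N_4'(x_n)$, which is the \emph{negative} of what you wrote (up to replacing the denominator by $f'(\alpha)$). Carried through correctly, your computation yields $1-\beta_nf'(\alpha)\sim -c_5\,e_{n-1}e_{n-1,w}e_{n-1,y}e_{n-1,z}$ from the interpolation error alone. Second, and more substantively, your error formula controls $f'(x_n)-N_4'(x_n)$, whereas the lemma concerns $f'(\alpha)-N_4'(x_n)$; the discrepancy $f'(x_n)-f'(\alpha)=2c_2f'(\alpha)e_n+O(e_n^2)$ is \emph{not} negligible, because by the error equations $e_n\sim A(1-\beta_{n-1}f'(\alpha))^4e_{n-1}^8$ while $e_{n-1}e_{n-1,w}e_{n-1,y}e_{n-1,z}\sim c_2B(1-\beta_{n-1}f'(\alpha))^4e_{n-1}^8$ --- the two are of exactly the same order. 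So your argument establishes $1-\beta_nf'(\alpha)=O(e_{n-1}e_{n-1,w}e_{n-1,y}e_{n-1,z})$ but not the precise constant $c_5$ (nor its sign). Under the paper's own loose convention, which conflates $\sim$ and $O$, and since only the order enters the R-order theorem that follows, neither issue damages the application; but a clean proof should either state the lemma as an $O$-estimate or explicitly account for the $2c_2e_n$ contribution.
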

\begin{thm}
If $x_0$ is close to a simple zero of the function $f$, then the
convergence R-order of the method (\ref{b2}) with memory with the
corresponding expression (\ref{bbb}) of $\beta_n$ is at least 12.
\end{thm}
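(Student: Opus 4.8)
The plan is to convert the error relations of Theorems \ref{t1} and \ref{t2} into a coupled asymptotic recursion for the error $e_n$ and for the quantity $\delta_n:=1-\beta_n f'(\alpha)$, and then to read off the R-order from the exponents of that recursion. First I would record, straight from the proofs of those two theorems, the leading behaviour of the inner iterates and of the base step, namely $e_{n,w}\sim\delta_n e_n$, $e_{n,y}\sim\delta_n e_n^2$, $e_{n,z}\sim\delta_n^2 e_n^4$ and $e_{n+1}\sim\delta_n^4 e_n^8$; although those formulas were obtained for a fixed $\beta$, they survive unchanged with $\beta=\beta_n$ because $\delta_n\to 0$ and only the leading coefficients enter them.

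Next I would substitute these inner estimates, taken at step $n-1$, into Lemma \ref{l1}. Since
\begin{equation*}
\delta_n\sim c_5\,e_{n-1}\,e_{n-1,w}\,e_{n-1,y}\,e_{n-1,z}\sim c_5\,e_{n-1}\cdot(\delta_{n-1}e_{n-1})\cdot(\delta_{n-1}e_{n-1}^2)\cdot(\delta_{n-1}^2 e_{n-1}^4),
\end{equation*}
this collapses to $\delta_n\sim c_5\,\delta_{n-1}^4 e_{n-1}^8$. Together with the base error equation shifted by one index, $e_n\sim\delta_{n-1}^4 e_{n-1}^8$, it follows that $\delta_n$ and $e_n$ are of the same asymptotic order.

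To extract the order I would use the standard device for methods with memory: posit $e_n\sim A\,e_{n-1}^p$ and $\delta_n\sim B\,e_{n-1}^{s}$, rewrite the right-hand sides of $e_n\sim\delta_{n-1}^4 e_{n-1}^8$ and $\delta_n\sim\delta_{n-1}^4 e_{n-1}^8$ in powers of $e_{n-1}$ (using $e_{n-1}\sim A\,e_{n-2}^p$ and $\delta_{n-1}\sim B\,e_{n-2}^{s}$), and match exponents. This gives the pair $p^2=4s+8p$ and $ps=4s+8p$; subtracting forces $s=p$, and then $p^2=12p$, so $p=12$. Equivalently, once $\delta_n\sim(\text{const})\,e_n$ is known, the relation $e_{n+1}\sim\delta_n^4 e_n^8$ immediately becomes $e_{n+1}\sim(\text{const})\,e_n^{12}$. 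Hence the R-order of (\ref{b2}) with the memory rule (\ref{bbb}) is at least $12$.

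The hard part will be the rigour underneath the $\sim$-calculus rather than the exponent arithmetic: one has to check that the Taylor-based error expansions proved for a constant $\beta$ really do persist when $\beta=\beta_n$ drifts with $n$ toward $1/f'(\alpha)$, and that the power ansatz is admissible, so that the exponent matching truly bounds the R-order from below rather than merely describing a formal order. This is the point at which the notion of lower R-order and a majorant argument in the style of Traub \cite{Traub} would be brought in; everything else is bookkeeping of exponents.
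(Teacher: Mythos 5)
Your proposal is correct and follows essentially the same route as the paper: both rest on Lemma \ref{l1} together with the leading-order error relations (writing $\delta_n=1-\beta_n f'(\alpha)$) $e_{n,w}\sim\delta_n e_n$, $e_{n,y}\sim\delta_n e_n^2$, $e_{n,z}\sim\delta_n^2 e_n^4$, $e_{n+1}\sim\delta_n^4 e_n^8$, and extract the R-order by matching exponents of $e_{n-1}$. The only difference is bookkeeping: the paper assigns separate R-orders $q,p,s$ to the sequences $w_n,y_n,z_n$ and solves the resulting four-equation nonlinear system (obtaining $r=12$, $s=6$, $p=3$, $q=2$), whereas you collapse the intermediate errors at once into $\delta_n\sim c_5\,\delta_{n-1}^4 e_{n-1}^8\sim\mathrm{const}\cdot e_n$ and read off $e_{n+1}\sim\mathrm{const}\cdot e_n^{12}$ directly --- a tidier computation reaching the same conclusion, and your closing caveat about justifying the $\sim$-calculus for a drifting $\beta_n$ is a fair point that the paper's own proof also leaves implicit.
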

\begin{proof}
Let $(x_n)$ be a sequence of approximations produced by an
iterative method (\ref{b2}). If $f(\alpha)=0$ and this sequence
converges to $\alpha $ with the R-order $Q_R((\ref{b2}),
\alpha)\geq r$, we will write
\begin{equation}
e_{n+1}\sim D_{n,r} e_n^r, \quad e_n=x_n-\alpha,
\end{equation}
where $D_{n,r}$ refers to the asymptotic error of (\ref{b2}) when $n$ tend to infinity.
In other words, we get
\begin{equation}\label{f1}
 e_{n+1}\sim D_{n,r}(D_{n-1,r} e_{n-1}^r)^r = D_{n,r} D_{n-1,r}^r
e_{n-1}^{r^2}.
\end{equation}
Assume that the sequences $( w_n) $,  $( y_n)$ and
$(z_n)$ have the R-order $q$, $p$ and $s$, respectively, that
is
\begin{equation}\label{f2}
e_{n,w}\sim D_{n,q}e_n^q \sim
D_{n,q}(D_{n-1,r}e_{n-1}^r)^q=D_{n,q}D_{n-1,r}^q e_{n-1}^{rq},
\end{equation}
\begin{equation}\label{f3}
e_{n,y}\sim D_{n,p}e_n^p \sim
D_{n,p}(D_{n-1,r}e_{n-1}^r)^p=D_{n,p}D_{n-1,r}^p e_{n-1}^{rp},
\end{equation}
and
\begin{equation}\label{f4}
e_{n,z}\sim D_{n,s}e_n^s \sim
D_{n,s}(D_{n-1,r}e_{n-1}^r)^s=D_{n,s}D_{n-1,r}^s e_{n-1}^{rs}.
\end{equation}
Also, we have \\
\\
$e_{n,w}\sim \left(1-\beta_n f'(\alpha)\right)e_n,$
\\
\\
$e_{n,y}\sim c_2\left(1-\beta_n f'(\alpha)\right)e_n^2,$
\\
\\
$e_{n,z}\sim B\left(1-\beta_n f'(\alpha)\right)^2e_n^4,$\\
\\
where $B=-c_2(c_2^2(-1-\beta_n f'(\alpha)-2\gamma^2(1-\beta_n
f'(\alpha)))+c_3),$
\\
\\
$e_{n+1}\sim A(1-\beta_n f'(\alpha))^4e_n^8,$\\
\\
where $A=c_2^2(c_2^2(-1-\beta_n f'(\alpha)-2\gamma^2(1-\beta_n
f'(\alpha))+c_3) (c_2^3(-1-\beta_n f'(\alpha)-2\gamma^2(1-\beta_n
f'(\alpha))+c_2c_3-c_4)$.
Therefore by Lemma \ref{l1}, we obtain
\begin{equation}\label{g2}
\begin{split}
e_{n,w}&\sim (1-\beta_n f'(\alpha))e_n\sim
(c_5e_{n-1}e_{n-1,w}e_{n-1,y}e_{n-1,z})e_n\\
&\sim c_5D_{n-1,q}D_{n-1,p}D_{n-1,s}D_{n-1,r}e_{n-1}^{r+p+s+q+1},
\end{split}
\end{equation}
\begin{equation}\label{g3}
\begin{split}
e_{n,y}&\sim c_2(1-\beta_n f'(\alpha))e_n^2\sim
c_2(c_5e_{n-1}e_{n-1,w}e_{n-1,y}e_{n-1,z})e_n^2\\
&\sim c_2c_5D_{n-1,q}D_{n-1,p}D_{n-1,s}D_{n-1,r}^2
e_{n-1}^{2r+s+p+q+1},
\end{split}
\end{equation}
\begin{equation}\label{g4}
\begin{split}
e_{n,z}&\sim B (1-\beta_n f'(\alpha))^2e_n^4\sim
B(c_5e_{n-1}e_{n-1,w}e_{n-1,y}e_{n-1,z})^2e_n^4\\
&\sim Bc_5^2D_{n-1,q}D_{n-1,p}^2D_{n-1,s}^2D_{n-1,r}^4
e_{n-1}^{4r+2s+2p+2q+2},
\end{split}
\end{equation}
\begin{equation}\label{g1}
\begin{split}
e_{n+1}&\sim A (1-\beta_n f'(\alpha))^4e_n^8\sim
A(c_5e_{n-1}e_{n-1,w}e_{n-1,y}e_{n-1,z})^4e_n^8\\
& \sim Ac_5^4D_{n-1,q}D_{n-1,p}^4D_{n-1,s}^4D_{n-1,r}^6
e_{n-1}^{8r+4s+4p+4q+4}.
\end{split}
\end{equation}
By comparing exponents of $e_{n-1}$ appearing in two pairs of
relations (\ref{f1}), (\ref{g1}) and (\ref{f2}), (\ref{g2}) and
(\ref{f3}), (\ref{g3}) and (\ref{f4}), (\ref{g4}), we obtain the
nonlinear system of four equations and four unknown $r$, $s$, $p$
and $q$.
\begin{equation*}
\begin{cases}
&r^2-8r-4s-4p-4q-4=0,\\
&rs-4r-2s-2p-2q-2=0,\\
&rp-2r-s-p-q-1=0,\\
&rq-r-s-p-q-1=0.
\end{cases}
\end{equation*}
A non-trivial solution of the above system is $ r=12$, $s=6$,
$p=3$, $q=2$.

We have proved that the convergence order of the iterative method
(\ref{b2}) is at least 12.
\end{proof}

\section{Numerical example}

In this section, we show the results of some numerical tests to
compare the efficiencies of methods, using the programming package
Mathematica. To obtain very high accuracy and avoid the loss of
significant digits, we employed multi-precision arithmetic with
1200 significant decimal digits in the programming package
Mathematica 8.

In what follows, we present some concrete iterative methods from the scheme
(\ref{b2}).

\textbf{Method 1.} Choose the weight function $G$ as follows:
\begin{equation}\label{m11}
G(t_n)=1-t_n,
\end{equation}
where $t_n=\frac{f(y_n)}{f(x_n)}$ . The function $G$ in
(\ref{m11}) satisfies the assumptions of Theorem \ref{t2}, then we
have the following method
\begin{equation}\label{m1}
\begin{cases}
y_n=x_n-\dfrac{\beta_n f(x_n)^2}{f(x_n)-f(w_n)},\quad w_n=x_n-\beta_nf(x_n), \quad \beta_n=\frac{1}{N'_4(x_n)},\\
z_n=y_n-\dfrac{f(x_n)+\gamma
f(y_n)}{f(x_n)+(\gamma-2)f(y_n)}\cdot\dfrac{f(x_n)-f(y_n)}{f(x_n)}\cdot\dfrac{f(y_n)}{f[y_n,w_n]},\\
x_{n+1}=z_n-\dfrac{f(z_n)}{f[z_n,y_n]+f[z_n,y_n,x_n](z_n-y_n)+f[z_n,y_n,x_n,w_n](z_n-y_n)(z_n-x_n)},\\
\end{cases}
\end{equation}
where in general, the divide differential of order $n$ is obtained
as:\\
$f[x_0,x_1,\ldots,x_n]=\dfrac{f[x_1,x_2,\ldots
x_n]-f[x_0,x_1,\ldots,x_{n-1}]}{x_n-x_0}$ for $n=1,2,\ldots$.

\textbf{Method 2.} Choose the weight function $G$ as follows:
\begin{equation}\label{m22}
G(t_n)=1-\dfrac{t_n}{1+t_n},
\end{equation}
where $t_n=\frac{f(y_n)}{f(x_n)}$ . The function $G$ in
(\ref{m22}) satisfies the assumptions of Theorem \ref{t2}, then we
have the following method
\begin{equation}\label{m2}
\begin{cases}
y_n=x_n-\dfrac{\beta_n f(x_n)^2}{f(x_n)-f(w_n)},\quad w_n=x_n-\beta_nf(x_n), \quad \beta_n=\frac{1}{N'_4(x_n)},\\
z_n=y_n-\dfrac{f(x_n)+\gamma
f(y_n)}{f(x_n)+(\gamma-2)f(y_n)}\cdot\dfrac{f(x_n)}{f(x_n)+f(y_n)}\cdot\dfrac{f(y_n)}{f[y_n,w_n]},\\
x_{n+1}=z_n-\dfrac{f(z_n)}{f[z_n,y_n]+f[z_n,y_n,x_n](z_n-y_n)+f[z_n,y_n,x_n,w_n](z_n-y_n)(z_n-x_n)}.\\
\end{cases}
\end{equation}
\textbf{Method 3.} Choose the weight function $G$ as follows:
\begin{equation}\label{m33}
G(t_n)=\dfrac{1-2t_n}{1-t_n},
\end{equation}
where $t_n=\frac{f(y_n)}{f(x_n)}$ . The function $G$ in
(\ref{m33}) satisfies the assumptions of Theorem \ref{t2}, then we
have the following method
\begin{equation}\label{m3}
\begin{cases}
y_n=x_n-\dfrac{\beta_n f(x_n)^2}{f(x_n)-f(w_n)},\quad w_n=x_n-\beta_nf(x_n), \quad \beta_n=\frac{1}{N'_4(x_n)},\\
z_n=y_n-\dfrac{f(x_n)+\gamma
f(y_n)}{f(x_n)+(\gamma-2)f(y_n)}\cdot\dfrac{f(x_n)-2f(y_n)}{f(x_n)-f(y_n)}\cdot\dfrac{f(y_n)}{f[y_n,w_n]},\\
x_{n+1}=z_n-\dfrac{f(z_n)}{f[z_n,y_n]+f[z_n,y_n,x_n](z_n-y_n)+f[z_n,y_n,x_n,w_n](z_n-y_n)(z_n-x_n)}.\\
\end{cases}
\end{equation}
\textbf{Method 4.} Choose the weight function $G$ as follows:
\begin{equation}\label{m44}
G(t_n)=(1-t_n)^{\frac{2t_n+1}{t_n+1}},
\end{equation}
where $t_n=\frac{f(y_n)}{f(x_n)}$ . The function $G$ in
(\ref{m44}) satisfies the assumptions of Theorem \ref{t2}, then we
have the following method
\begin{equation}\label{m4}
\begin{cases}
y_n=x_n-\dfrac{\beta_n f(x_n)^2}{f(x_n)-f(w_n)},\quad w_n=x_n-\beta_nf(x_n), \quad \beta_n=\frac{1}{N'_4(x_n)},\\
z_n=y_n-\dfrac{f(x_n)+\gamma
f(y_n)}{f(x_n)+(\gamma-2)f(y_n)}\cdot\left(\dfrac{f(x_n)-f(y_n)}{f(x_n)}\right)^{\frac{2f(y_n)+f(x_n)}{f(y_n)+f(x_n)}}\cdot\dfrac{f(y_n)}{f[y_n,w_n]},\\
x_{n+1}=z_n-\dfrac{f(z_n)}{f[z_n,y_n]+f[z_n,y_n,x_n](z_n-y_n)+f[z_n,y_n,x_n,w_n](z_n-y_n)(z_n-x_n)}.\\
\end{cases}
\end{equation}

The new proposed methods with memory were compared with existing
three-point methods given in what follows, having the same order
convergence and with the same initial data $x_0$ and $\beta_0$.\\
\textbf{Method 5.} The derivative-free method by Kung and Traub
\cite{Kung} given by
\begin{equation}\label{x1}
\begin{cases}
y_n=x_n-\dfrac{f(x_n)}{f[w_n,x_n]},\quad w_n=x_n+\beta_n f(x_n),\quad \beta_n=\frac{-1}{N'(x_n)},\\
z_n=y_n-\dfrac{f(y_n)f(w_n)}{\left(f(w_n)-f(y_n)\right)f[x_n,y_n]},\\
x_{n+1}=z_n-\dfrac{f(y_n)f(w_n)\left(y_n-x_n+\dfrac{f(x_n)}{f[x_n,z_n]}\right)}{\left(f(y_n)-f(z_n)\right)\left(f(w_n-f(z_n)\right)}+\dfrac{f(y_n)}{f[y_n,z_n]}.
\end{cases}
\end{equation}

\textbf{Method 6.} The method by Sharma et al.\ \cite{Sharma2}
given by
\begin{equation}\label{x2}
\begin{cases}
y_n=x_n-\dfrac{ f(x_n)}{\varphi(x_n)},\\
z_n=y_n-H(u_n,v_n)\dfrac{f(y_n)}{\varphi(x_n)},\\
x_{n+1}=z_n-\dfrac{f(z_n)}{f[z_n,y_n]+f[z_n,y_n,x_n](z_n-y_n)+f[z_n,y_n,x_n,w_n](z_n-y_n)(z_n-x_n)},
\end{cases}
\end{equation}
where $\varphi(x_n)=\frac{f(w_n)-f(x_n)}{\beta_n f(x_n)}$,
$w_n=x_n+\beta_n f(x_n)$, $\beta_n=\frac{-1}{N'(x_n)}$,
$H(u_n,v_n)=\dfrac{1+u_n}{1-v_n}$, $u_n=\frac{f(y_n)}{f(x_n)}$ and
$v_n=\frac{f(y_n)}{f(w_n)}$.

\textbf{Method 7.} The method by Zheng et al.\ \cite{Zheng} given
by
\begin{equation}\label{x3}
\begin{cases}
y_{n}=x_{n}-\dfrac{f(x_{n})}{f[x_{n},w_{n}]},\quad w_n=x_n+\beta_n f(x_n),\quad \beta_n=\frac{-1}{N'(x_n)},\\
z_{n}=y_{n}-
\dfrac{f(y_{n})}{f[y_n,x_n]+f[y_n,x_n,w_n](y_n-x_n)},\\
x_{n+1}=z_n-\dfrac{f(z_n)}{f[z_n,y_n]+f[z_n,y_n,x_n](z_n-y_n)+f[z_n,y_n,x_n,w_n](z_n-y_n)(z_n-x_n)},
\end{cases}
\end{equation}


In order to test our proposed methods with memory (\ref{m1}),
(\ref{m2}), (\ref{m3}) and (\ref{m4}) and also compare them with
the methods \eqref{x1}, \eqref{x2} and \eqref{x3}, we choose the
initial value $x_0$ using the \texttt{Mathematica} command
\texttt{FindRoot} \cite[pp.\ 158--160]{Hazrat} and compute the
error and the computational order of convergence (coc) by the
approximate formula \cite{Fer}
\begin{equation*}
\text{coc}\approx\frac{\ln|(x_{n+1}-\alpha)/(x_{n}-\alpha)|}{\ln|(x_{n}-\alpha)/(x_{n-1}-\alpha)|}.
\end{equation*}
\newpage
\begin{table}[h!]
\begin{center}
\begin{tabular}{l c}
  \hline
    Test function $f_n$ & Root $\alpha$
  \\ \hline
    $f_1(x) = \ln (x^2-2x+2)+e^{x^2-5x+4}\sin(x-1)$ & $1$
  \\[0.5ex]
   $f_2(x) = e^{x^2+x \cos(x)-1}\sin(\pi x)+x\ln(x\sin (x)+1)$ & $0$
  \\[0.5ex]
   $ f_3(x)=\left(1-\sin(x^2)\right)\frac{1+x^2}{1+x^3}+x\ln(x^2-\pi +1)-\frac{1+\pi}{1+\sqrt{\pi^3}}$ & $\sqrt{\pi}\ $
    \\ \hline
\end{tabular}
\end{center}
\vspace*{-3ex} \caption{Test functions $f_1, f_2, f_3$ and root
$\alpha$.\label{table1}}
\end{table}
\begin{table}[h!]
\begin{center}
\begin{tabular}{c l l l l }
\hline
\\
 ~~ & Method $(\ref{m1})$ & Method $(\ref{m2})$ & Method $(\ref{m3})$ & Method$(\ref{m4})$ \\
\\
\hline
\\
$f_1$, $x_0=1.35$\\
$\vert x_{1}-\alpha\vert$ & $0.314e-6$ & $0.134e-5$ & $0.543e-6$ & $0.703e-6$ \\
$\vert x_{2}-\alpha\vert$ & $0.110e-66$& $0.153e-61$ & $0.715e-65$ & $0.542e-64$ \\
$\vert x_{3}-\alpha\vert$ & $0.178e-800$ & $0.914e-739$ & $0.100e-788$ & $0.360e-768$\\
$coc$ & $12.1378$ & $12.1056$ & $12.1238$ & $12.1175$\\
\\
\\
$f_2$, $x_0=0.6$\\
$\vert x_{1}-\alpha\vert$ & $0.900e-4$ & $0.585e-3$ & $0.308e-4$ & $0.539e-4$  \\
$\vert x_{2}-\alpha\vert$ & $0.111e-44$& $0.713e-38$ & $0.650e-49$  & $0.335e-48$ \\
$\vert x_{3}-\alpha\vert$ & $0.255e-536$ & $0.126e-454$ & $0.419e-587$ & $0.148e-578$ \\
$coc$ & $12.0178$ & $11.9363$ & $12.0465$ & $11.9972$\\
\\
\\
$f_3$, $x_0=1.7$\\
$\vert x_{1}-\alpha \vert$ & $0.836e-8$ & $0.164e-7$ & $0.605e-8$ & $0.290e-8$ \\
$\vert x_{2}-\alpha \vert$ & $0.102e-94$& $0.101e-96$ & $0.624e-97$ & $0.116e-100$ \\
$\vert x_{3}-\alpha \vert$ & $0.134e-1138$ & $0.118e-1090$ & $0.341e-1165$ & $0.612e-1210$\\
$coc$ & $12.0110$ & $12.0173$ & $12.0048$ & $12.0057$ \\
\hline
\end{tabular}
\end{center}
\vspace*{-3ex} \caption{Errors and coc for methods (\ref{m1}),
(\ref{m2}), (\ref{m3}) and (\ref{m4}) with $\gamma=0$.
\label{table2}}
\end{table}

\newpage
\begin{table}[h!]
\begin{center}
\begin{tabular}{c l l l }
\hline
\\
 ~~ & Method $(\ref{x1})$ & Method $(\ref{x2})$ & Method $(\ref{x3})$  \\
\\
\hline
\\
$f_1$, $x_0=1.35$\\
$\vert x_{1}-\alpha \vert$ & $0.845e-4$ & $0.308e-6$ & $0.148e-5$  \\
$\vert x_{2}-\alpha \vert$ & $0.393e-45$& $0.179e-67$ & $0.157e-61$  \\
$\vert x_{3}-\alpha \vert$ & $0.100e-540$ & $0.126e-812$ & $0.481e-738$ \\
$coc$ & $11.9906$ & $12.1688$ & $12.0973$ \\
\\
\\
$f_2$, $x_0=0.6$\\
$\vert x_{1}-\alpha \vert$ & $0.798e-3$ & $0.891e-4$ & $0.214e-4$   \\
$\vert x_{2}-\alpha \vert$ & $0.194e-40$& $0.541e-45$ & $0.168e-53$   \\
$\vert x_{3}-\alpha \vert$ & $0.976e-486$ & $0.274e-543$ & $0.386e-642$  \\
$coc$ & $11.8387$ & $12.0827$ & $11.9875$ \\
\\
\\
$f_3$, $x_0=1.7$\\
$\vert x_{1}-\alpha \vert$ & $0.241e-8$ & $0.757e-8$ & $0.221e-7$  \\
$\vert x_{2}-\alpha \vert$ & $0.137e-99$& $0.267e-96$ & $0.140e-90$  \\
$\vert x_{3}-\alpha \vert$ & $0.283e-1196$ & $0.561e-1158$ & $0.546e-1089$ \\
$coc$ & $12.0190$ & $12.0028$ & $12.0001$  \\
\hline
\end{tabular}
\end{center}
\vspace*{-3ex} \caption{Errors and coc for methods (\ref{x1}),
(\ref{x2}) and (\ref{x3}). \label{table3}}
\end{table}
In Table \ref{table2}, the proposed methods with memory with
weight functions (\ref{m11}), (\ref{m22}), (\ref{m33}) and
(\ref{m44}) and in Tabel \ref{table3} the methods
(\ref{x1})-(\ref{x3}) have been tested on three different
nonlinear equations which were introduced in Table {\ref{table1}.
It is clear that these methods are in accordance with the
developed theory.
\section{Conclusion}
We have introduced a new method with memory for approximating a
simple root of a given nonlinear equation. An increase of the
convergence order is attained without any additional function
evaluations, which points to a very high computational efficiency
of the proposed methods with memory. We have proved that the
convergence order of the new method with memory is, at least 12.
So its efficiency index is $12^{1/4}=1.86121$ which is greater
than that of the three-point methods of order eight
$8^{1/4}=1.68179$ with the same function evaluations. Numerical
examples show that our methods with memory work and can compete
with other methods under the same conditions.



\begin{thebibliography}{99}

\bibitem{Chun1}Chun, C., Lee, M.Y., A new optimal eighth-order family of iterative methods for the solution of nonlinear equations,
Appl. Math. Comput., 223, 506-519, (2013).

\bibitem{Cordero22}
Cordero, A., Torregrosa, J.R., Vassileva, M.P., Three-step
iterative methods with optimal eighth-order convergence, J.
Comput. Appl. Math., 235, 3189-3194, (2011).

\bibitem{Cordero5}
Cordero, A., Fardi, M., Ghasemi, M., Torregrosa, J.R., Accelerated
iterative methods for finding solutions of nonlinear equations and
their dynamical behavior, Calcolo, 1-14, (2012).

\bibitem{Dzunic}
Dzunic, J., Petkovic, M.S., Petkovic, L.D., Three-point methods
with and without memory for solving nonlinear equatins, Appl.
Math. Comput., 218, 4917-4927,(2012).

\bibitem{Hazrat}
Hazrat, R., Mathematica�: A Problem-Centered Approach,
Springer-Verlag, 2010.


\bibitem{Jarrat}
Jarratt, P., Some fourth order multipoint iterative methods for
solving equations, Math. Comput. 20, 434-437, (1966).

\bibitem{King}
King, R.F., Family of four order methods for nonlinear equations
SIAM J. Numer. Anal.  10, 876-879, (1973).

\bibitem{Kung}
 Kung, H.T., Traub, J.F., Optimal order of one-point and multipoint iteration, J. Assoc. Comput. Math. 21, 634-651, (1974).

\bibitem{Lotfi}
Lotfi, T., Sharifi, S., Salimi, M., Siegmund, S., A new class of
three-point methods with optimal convergence order eight and its
dynamics, Numer. Algor., DOI 10.1007/s11075-014-9843-y, (2014).

\bibitem{Neta0} Neta, B., On a family of multipoint
methods for nonlinear equations, Intern, J. Computer Math., 9,
353-361, (1981).

\bibitem{Neta0} Neta, B., On a family of multipoint
methods for nonlinear equations, Intern. J. Computer Math., 9,
353-361, (1981).

\bibitem{Ostrowski}
Ostrowski, A.M., Solution of Equations and Systems of Equations,
Academic Press, New York, 1966.

\bibitem{Pet}
Petkovic, M.S., Dzunic, J., Neta, B., Interpolatory multipoint
methods with memory for solving nonlinear equations, Appl. Math.
Comput., 218, 2533-2541, (2011).

\bibitem{Petkovic}
Petkovic, M.S., Neta, B., Petkovic, L.D.,  Dzunic, J., Multipoint
Methods for Solving Nonlinear Equations, Elsevier, Waltham, MA,
2013.

\bibitem{Salimi}
Salimi, M., Lotfi, T., Sharifi, S., Seigmund, S., Optimal
Newton-Secant like methods without memory for solving nonlinear
equations with its dynamics, Numer. Algor., Revised, (2014).

\bibitem{Sharma2}
Sharma, J.R., Guha, R.K., Gupa, P. Some efficient derivative free
methods with memory for solving nonlinear equations, Appl. Math.
Comput., 219, 699-707, (2012).

\bibitem{Sharma1}
Sharma, J.R., Sharma, R., A new family of modified Ostrowski's
methods with accelerated eighth order convergence, Numer. Algor.,
54, 445-458, (2010).

\bibitem{Traub}
Traub, J.F., Iterative Methods for the Solution of Equations,
Prentice Hall, New York, 1964.

\bibitem{Fer} Weerakoon, S., Fernando, T.G.I.,
A variant of Newton's method with accelerated third-order
convergence, Appl. Math. Lett., 13, 87-93, (2000).

\bibitem{Zheng}
Zheng, Q., Li, J., Huang, F., An optimal Steffensen-type family
for solving nonlinear equations, Appl. Math. Comput., 217,
9592-9597, (2011).
\end{thebibliography}
\end{document}